\theoremstyle{plain} 
\newtheorem{theorem}{Theorem}[section] 
\newtheorem{lemma}[theorem]{Lemma}
\newtheorem{defn}[theorem]{Definition}
\newtheorem{remark}[theorem]{Remark}
\numberwithin{equation}{section}
\newcommand{\R}{\ensuremath{\mathbb{R}}}
\newcommand{\Rn}{\ensuremath{\mathbb{R}^n}}
\newcommand{\N}{\ensuremath{\mathbb{N}}}
\newcommand{\eps}{\ensuremath{\varepsilon}}
\newcommand{\frlap}{\ensuremath{(-\Delta)^s}}
\newcommand{\al}{\ensuremath{ &\;}}
\newcommand{\lr}[1]{\left( #1 \right)}
\newcommand{\lrq}[1]{\left[ #1 \right]}
\newcommand{\eqlab}[1]{\begin{equation}  \begin{aligned}#1 \end{aligned}\end{equation}}
\newcommand{\bgs}[1]{\begin{equation*} \begin{aligned}#1\end{aligned}\end{equation*}}
  \newcommand{\sys}[2][]{\begin{equation*}#1  \left\{\begin{aligned}#2\end{aligned}\right.\end{equation*}}
\begin{document}
\nocite*
\date{}
\author{Claudia Bucur}

\address{Claudia Bucur: Dipartimento di Matematica\\ Universit\`a degli Studi di Milano \\ Via Cesare Saldini, 50 \\ 20100, Milano-Italy}

\email{claudia.bucur@unimi.it}

\author{Aram L. Karakhanyan}
\address{Aram L. Karakhanyan: Maxwell Institute for Mathematical Sciences and
School of Mathematics, University of Edinburgh,
James Clerk Maxwell Building, Peter Guthrie Tait Road,
Edinburgh EH9 3FD,
United Kingdom.
}
\email{aram.karakhanyan@ed.ac.uk}
\keywords{}
\email{}
\thanks{The research of the second author is partially supported by EPSRC grant}

\title[]{Potential theoretic approach to Schauder estimates for the fractional laplacian}
\begin{abstract}
We present an elementary approach for the proof of  Schauder estimates for the equation
$(-\Delta)^s u(x)=f(x), \,0<s<1$, with $f$ having a modulus of continuity $\omega_f$, 
based on the Poisson representation formula and 
dyadic ball approximation argument. We give the explicit modulus of continuity of 
$u$ in balls $B_r(x)\subset \R^n$ in terms of $\omega_f$.
\end{abstract}

\maketitle

\tableofcontents
\section{Introduction}

Let $f$ be a given H\"{o}lder continuous function and $u$ solving
$(-\Delta)^s u(x)=f(x)$ with a fixed $s\in (0,1)$.
We want to study the regularity of $u$ using a very simple method 
based on the Poisson representation formula
and dyadic ball approximation argument. 
In order to formulate our results it is convenient to introduce some notations and 
basic knowledge on the fractional Laplacian and on some related kernels. 
For further details on the fractional Laplacian and applications, see \cite{nonlocal,galattica}.  
Also, for regularity up to the boundary of weak solutions of the Dirichlet problem, see the very nice paper \cite{oton}.

\smallskip 

In what follows we assume that $n\geq 2$.
We denote by $\mathcal{S}$ the Schwartz space of rapidly decreasing functions, defined as follows 
\[\mathcal S = \left\{ u \in C^\infty(\Rn) \quad \mbox{s.t. for any } \alpha,\, \beta \in \N_0^n,  \quad \sup_{x\in\Rn} |x^\beta D^\alpha u(x) |<\infty\right\}. \]

Let $s\in (0,1)$ be fixed. We have the following integral definition.
\begin{defn} The fractional Laplacian of $u\in \mathcal S$ is defined for any $x\in \Rn$ as
\bgs{  \frlap u(x) =&\;   C(n,s) \mbox{P.V.}\int_{\Rn} \frac{u(x) -u(x-y)}{|y|^{n+2s}} \, dy\\
				=&\; C(n,s)  \lim_{\eps \to 0}  \int_{\Rn\setminus B_\eps}  \frac{u(x) -u(x-y)}{|y|^{n+2s}} \, dy,}
				where $P.V.$ stand for ``{in the principal value sense}'', as defined in the last line, and $C(n,s)$ is a  constant depending only on $n$ and $s$. In what follows we call such constants dimensional.
\end{defn} 

With a change of variables, one obtains an equivalent representation, given by
	\[ \frlap u(x)=\frac{C(n,s)}2 \int_{\Rn} \frac{ 2 u(x) -u(x-y)-u(x+y)}{|y|^{n+2s}} \, dy.\]
	In this expression the principle value may be omitted if $u \in L_s^1 (\Rn) \cap C^{2s+\eps} (B_r(x))$ for a small $r>0$. Here, the space $L_s^1(\Rn)$ is the weighted $L^1$ space, defined as	 \[L^1_s (\Rn):=\left\{ u \in L^1_{\text{loc}}(\Rn)\; \mbox{ s.t. } \; \int_{\Rn}\frac{ |u(x)|}{1+|x|^{n+2s}} \, dx <\infty\right\}.\] Moreover, we denote by $C^{2s+\eps}$ for small $\eps$ the H\"{o}lder space $C^{0,2s+\eps}$ for $s<1/2$ and $C^{1,2s+\eps-1}$ for $s\geq 1/2$.
	
\smallskip 

The main result we prove in this paper is a Schauder type estimate for bounded solutions of the equation $(-\Delta)^s u=f$ in $B_1$. Given  $f\in C^{0,\alpha}(B_1)\cap C(\overline B_1)$, then on the half ball $u$ has the regularity of $f$ increased by $2s$. More precisely:
\begin{theorem} \label{schthm}Let $s\in(0,1)$,  $\alpha <1$ and $f\in C^{0,\alpha}(B_1)\cap C(\overline B_1)$ be a given function with modulus of continuity $\omega(r):=\sup_{|x-y|<r} |f(x)-f(y)|$.
 Let  $u\in  L^{\infty}(\Rn)\cap C^1(B_1)$ be a pointwise solution of $\frlap u=f$ in $B_1$. Then for any $x,y \in B_{1/2}$ and denoting $\delta:=|x-y|$ we have that for $ s\leq 1/2$ 
  \eqlab{\label{mm1}    |u(x)-u(y)|\leq &\; C_{n,s}  \bigg( \delta  \|u\|_{L^{\infty}(\Rn\setminus B_1)} +\delta \sup_{\overline B_1}|f| + \int_0^{c \delta} \omega(t)t^{2s-1}\, dt + \delta \int_{\delta}^1  \omega(t)t^{2s-2}\, dt\bigg) }
  while for $s>1/2$ 
  \eqlab{\label{mm} 
 |Du(x)-Du(y)|\leq &\; C_{n,s}  \bigg(\delta  \|u\|_{L^{\infty}(\Rn\setminus B_1)} +\delta \sup_{\overline B_1}|f| + \int_0^{c \delta} \omega(t)t^{2s-2}\, dt+ \delta \int_{\delta}^1  \omega(t)t^{2s-3}\, dt\bigg) ,}
 where $C_{n,s}$ and $ c$ are positive dimensional constants.
\end{theorem}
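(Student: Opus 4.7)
The plan is to combine the Poisson and Green representations for $(-\Delta)^s$ on balls with a dyadic freezing of $f$. Fix a reference point $x_0 \in B_{1/2}$ (e.g.\ the midpoint of $x$ and $y$) and, for each radius $r \in (0,1/2]$, decompose on $B_r(x_0)$ as $u = h_r + w_r$, where $h_r$ is the $s$-harmonic function agreeing with $u$ outside $B_r(x_0)$ (given by the Poisson kernel $P_{B_r}$ applied to $u|_{\R^n \setminus B_r(x_0)}$) and $w_r(z) = \int_{B_r(x_0)} G_{B_r}(z,\zeta)\,f(\zeta)\,d\zeta$ is the Green potential of $f$. Freezing $f(\zeta) = f(x_0) + (f(\zeta)-f(x_0))$ further splits $w_r = f(x_0)\,\phi_r + \tilde w_r$, where $\phi_r(z) = \kappa_{n,s}(r^2 - |z-x_0|^2)_+^s$ is the torsion function of the ball (the explicit solution of $(-\Delta)^s\phi_r = 1$ in $B_r$ with zero exterior data) and the oscillation piece satisfies $|\tilde w_r(z)| \lesssim \omega(r)\,r^{2s}$ from the bound $\int_{B_r(x_0)} G_{B_r}(z,\zeta)\,d\zeta \lesssim r^{2s}$.

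Next I would establish two scale-invariant ingredients. The first is interior derivative estimates for $s$-harmonic functions, obtained by differentiating the explicit Poisson kernel: on $B_{r/2}(x_0)$ one has $\|D h_r\|_{L^\infty} \lesssim r^{-1}\|h_r\|_{L^\infty(\R^n)}$ for every $s \in (0,1)$, and additionally $\|D^2 h_r\|_{L^\infty} \lesssim r^{-2}\|h_r\|_{L^\infty(\R^n)}$ when $s > 1/2$. The second is a Hölder (respectively Lipschitz) control on $\tilde w_r$, inherited from the continuity of the map $z \mapsto G_{B_r}(z,\zeta)$ integrated against the modulus $\omega$. The explicit torsion piece $\phi_r$ is smooth in the interior with polynomial-in-$r$ derivative bounds and contributes only terms ultimately absorbed into $\sup_{\overline B_1}|f|$.

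Now iterate on the dyadic radii $r_k = 2^{-k}$ for $k = 0,1,\dots,k^\star$ with $r_{k^\star} \sim \delta$. Telescoping expresses $u(x) - u(y)$ as a sum over $k$ of increments $(h_{r_k} - h_{r_{k+1}})(x) - (h_{r_k} - h_{r_{k+1}})(y)$, plus the innermost term $w_{r_{k^\star}}(x) - w_{r_{k^\star}}(y)$ and an outermost contribution carrying $\|u\|_{L^\infty(\R^n\setminus B_1)}$. Each harmonic increment is $s$-harmonic on $B_{r_{k+1}}$ with sup norm dominated by the $w$-piece at the previous scale, of size $r_k^{2s}(\sup|f| + \omega(r_k))$; the interior gradient bound then yields an increment $\lesssim (\delta/r_{k+1})\cdot r_k^{2s}(\sup|f| + \omega(r_k))$. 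Summing the resulting geometric series and converting $\sum_k \omega(r_k)\,r_k^{2s-1}$ into the integral $\int_\delta^1 \omega(t)\,t^{2s-2}\,dt$, together with the innermost Green contribution $\int_0^{c\delta}\omega(t)\,t^{2s-1}\,dt$, reproduces precisely \eqref{mm1}. For $s > 1/2$, the same argument applied to $Du(x) - Du(y)$ with the $D^2 h_r$ bound in place of $Dh_r$ shifts each power of $r$ by one and yields \eqref{mm}.

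The main difficulty will be obtaining the sharp dependence on $r$ in the Poisson-kernel derivative estimates — the kernel $P_{B_r}$ degenerates near $\partial B_r$, and one must be careful that differentiating in $z$ does not worsen the expected scaling — and verifying that the telescoping of the explicit constant-right-hand-side piece $f(x_0)\phi_{r_k}$ across nested balls (with fixed $x_0$ but varying $r_k$) introduces no logarithmic loss; this step uses the algebraic form $\phi_r = \kappa_{n,s}(r^2-|z-x_0|^2)_+^s$ in an essential way. The borderline case $s = 1/2$, where the two regimes of \eqref{mm1} and \eqref{mm} meet, should also warrant a separate check.
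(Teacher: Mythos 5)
Your blueprint---dyadic freezing of $f$ at a fixed center, interior Poisson-kernel derivative estimates for $s$-harmonic functions, an $L^\infty$ bound for the Green potential of the oscillation, and conversion of a dyadic sum into the integrals appearing in \eqref{mm1}--\eqref{mm}---is the same as the paper's. But the paper's bookkeeping differs in exactly the one place you flag, and the difference is not merely cosmetic. For each $k$ the paper introduces $u_k$ solving $(-\Delta)^s u_k = f(0)$ in $B_{\rho^k}$ with $u_k = u$ outside; then $u_k - u_{k+1}$ is automatically $s$-harmonic in $B_{\rho^{k+1}}$ (both solve the \emph{same} frozen equation there), and $\|u_k-u_{k+1}\|_{L^\infty}\leq\|u_k-u\|+\|u-u_{k+1}\|\lesssim \rho^{2ks}\omega(\rho^k)$. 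The constant right-hand-side piece never re-enters the telescoping; $\sup_{\overline B_1}|f|$ appears only once, through the $j=0$ term of the sum and through the outermost auxiliary function $u_0$, whose increment is handled with the explicit torsion function $v_0=k_{n,s}f(0)(1-|x|^2)_+^s$ and Lemma \ref{estimate1}.

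In your version the increment $h_{r_k}-h_{r_{k+1}} = w_{r_{k+1}}-w_{r_k}$ carries the \emph{full} Green potential of $f$, whose $L^\infty$ size is $r_k^{2s}\sup|f|$, not $r_k^{2s}\omega(r_k)$. If you bound each increment by $(\delta/r_{k+1})\cdot r_k^{2s}(\sup|f|+\omega(r_k))$ and "sum the geometric series" as written, the $\sup|f|$-part contributes $\delta\sup|f|\sum_{k\le k^\star} r_k^{2s-1}\sim\delta^{2s}\sup|f|$ when $s<1/2$, and at the $Du$ level $\delta\sup|f|\sum_{k\le k^\star} r_k^{2s-2}\sim\delta^{2s-1}\sup|f|$ when $s>1/2$. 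Both are strictly worse than the asserted $\delta\sup|f|$ and are not absorbed by the other terms on the right of \eqref{mm1}--\eqref{mm}. The remedy you gesture at---exploiting the exact algebraic telescoping
\[
\sum_{j=0}^{k^\star-1} f(x_0)\bigl(\phi_{r_j}-\phi_{r_{j+1}}\bigr) + f(x_0)\phi_{r_{k^\star}} = f(x_0)\phi_{1},
\]
so that only the smooth, outer torsion function survives after re-inserting the innermost $w_{r_{k^\star}}$---does work; but note the loss you are guarding against is polynomial in $\delta$ for $s\neq 1/2$, not logarithmic (the log only appears at the single borderline $s=1/2$). Either carry out that cancellation explicitly, or, more simply, mimic the paper and absorb $f(x_0)\phi_{r_k}$ into the auxiliary function from the start, so that the increments you differentiate never see $\sup|f|$ at all.
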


 There are other approaches to prove Schauder estimates for the fractional order operators with more general kernels 
 see  \cite{Dong} and references therein. Here we follow the one proposed by Xu-Jia Wang in \cite{wang} which is based only on the 
 higher order derivative estimates, that we state here in Lemma \ref{estimate1} and on a maximum principle, given in Lemma \ref{estimate2}. 
  \smallskip

 We prove these estimates using some kernels related to the fractional Laplacian (see Chapter I.6 in \cite{Landkof} or \cite{bucur} for more details), that we introduce now. 
We take $r>0$ and introduce the fractional Poisson kernel on the ball. For any $ x \in B_r$ and any $ y \in \Rn \setminus \overline{B}_r$ we define
	\begin{equation} \label{poisson}
	 P_r(y,x) :=  c(n,s) \Bigg (\frac {r^2-|x|^2}{|y|^2-r^2}\Bigg)^s \frac {1}{|x-y|^n}, 
	\end{equation} 
where $c(n,s)$ is a dimensional constant given in such a way that
\[ \int_{\Rn\setminus B_r} P_r(y,x) \, dy =1.\] Then (see Theorem 2.10 in \cite{bucur}) one has for $u \in L^{\infty}(\Rn)\cap C(\Rn\setminus B_r)$ that the equation 
\[ \frlap u(x) = 0 \mbox{ in } B_r\] has a pointwise solution given by
\eqlab{ \label{poissonrepr} u(x) = \int_{{\Rn}\setminus B_r} P_r(y,x) u(y)\, dy.}

We recall also a representation formula for the equation $\frlap u=f$ in $\Rn$.
For any $x\in \Rn \setminus \{0\}$ we define
   	\begin{equation} 
		\Phi(x) := a(n,s){|x|^{-n+2s}},
		\label{fondsol}
	\end{equation}
where $a(n,s)$ is a dimensional constant.
The function $\Phi$ plays the role of the fundamental solution of the fractional Laplacian, i.e. in the distributional sense
\[ \frlap \Phi =\delta_0,\] where $\delta_0$ is the Dirac delta evaluated at $0$ (see Theorem 2.3 in \cite{bucur} for the proof). For a function $f\in C^{0,\eps}_c(\Rn),$ where  $\eps>0$ is a small quantity, we define 
\eqlab{\label{fondsolrepr} u(x):=f*\Phi(x)=a(n,s)\int_{\Rn}\frac{f(y)}{|x-y|^{n-2s}}\, dy.} Then (see Lemma 2.6 in \cite{bucur}) we have that $u \in L_s^1(\Rn)$. Moreover $u\in C^{2s+\eps}(\Rn)$ as we see in the next Section \ref{newton}. This, together with Theorem 2.8 in \cite{bucur} says that $u$ is pointwise solution of
\eqlab{ \label{fondsolcon} \frlap u=f \quad \mbox{in} \quad \Rn.} 
 
 \medskip 
 
One of the motivations to study \eqref{fondsolcon} comes from 
the active scalars (see \cite{Constantin}). The 2D incompressible Euler equation  
\begin{eqnarray}
\left\{
\begin{array}{lll}
\omega_t+v\nabla \omega=0\\
v=(\partial_2\psi, -\partial_1\psi)\\
\omega=\Delta\psi
\end{array}
\right.
\end{eqnarray}
is one of the well-known active scalar equations. Here $v$ is the 
velocity, $\omega$ the vorticity, $\psi$ the stream function. 

\smallskip

The uniqueness was proved  by Yudovich (see \cite{russo}) under the condition that 
$\omega(t)\in L^{\infty}(0, T; L^1(\R^2))\cap  L^{\infty}(0, T; L^\infty(\R^2))$. 
Observe that by the Biot -Savart law one has that $v=k*\omega$, where 
$$k(x)=\frac{x^\perp}{2\pi |x|^2}.$$ 
Clearly $k\in L^p_{loc}(\R^2), \,1\leq p<2$
and $k\in L^q(\R^2),\, q>2$ near infinity, implying that one must assume 
$\omega\in L^{p_o}(\R^2)\cap L^{q_o}(\R^2), \,p_o<2<q_o$ to make sure that $v=k*\omega$
is well defined.  In particular $p_o=1,\, q_o=\infty$ will do.

\medskip 

A generalization of the 2D Euler equation is the 
quasigeostrophic active scalar 
\begin{eqnarray}
\left\{
\begin{array}{lll}
\omega_t+v\nabla \omega=0\\
v=(\partial_2\psi, -\partial_1\psi)\\
-\omega=(-\Delta)^\frac12 \psi
\end{array}
\right.
\end{eqnarray}
or more generally when one takes $-\omega=(-\Delta)^s \psi, \,0<s<1$. 
Thus this leads to the study of $k_s*(\Delta)^{-s}\omega$
where $n=2$ and 
$$k_s(x)=\nabla^\perp \frac{C_{n,s}}{|x|^{n-2\sigma}}.$$
We see that the regularity of the stream function can be 
concluded from that of $\omega$ via Schauder estimates.
\medskip 


 \section{H\"{o}lder estimates for the Riesz  potentials}\label{newton}
In the next Lemma, we establish that given a bounded function with bounded support, its convolution with the function $\Phi$ defined in \eqref{fondsol} is H\"{o}lder continuous. 
 \begin{lemma}\label{lem1}
 Let $s\in (0,1/2)\cup (1/2,1)$ be fixed. Let $\Omega \subseteq \Rn$ be a bounded set, the function $f\in L^{\infty}(\Rn)$ be supported in $\Omega$ and $u$ be defined as
 \eqlab{  \label{lem1f2}u(x):= \int_{\Rn} \frac{f(y)}{|x-y|^{n-2s}} \, dy.}Then 
 $u\in C^{0,2s}(\Rn)$ for $s<1/2$ and $u\in C^{1,2s-1}$ for $s>1/2$. 
 \end{lemma}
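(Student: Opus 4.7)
The plan is to estimate the difference $u(x)-u(z)$ (and, in the second case, $Du(x)-Du(z)$) directly by decomposing the integral around the singularity $y=x$ of the Riesz kernel $K(x,y)=|x-y|^{2s-n}$, using only that $f\in L^\infty(\R^n)$ is supported in the bounded set $\Omega$. The dichotomy between $s<1/2$ and $s>1/2$ reflects whether the formal gradient kernel $|x-y|^{-(n-2s+1)}$ fails or succeeds to be locally integrable; this is exactly why the endpoint $s=1/2$ must be excluded.

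\textbf{Case $s\in(0,1/2)$.} Fix $x,z\in\R^n$, set $\delta:=|x-z|$, and write
\bgs{u(x)-u(z)=\int_{\Omega} f(y)\bigl[K(x,y)-K(z,y)\bigr]\,dy=:I_1+I_2,}
where $I_1$ integrates over $\Omega\cap B_{2\delta}(x)$ and $I_2$ over $\Omega\setminus B_{2\delta}(x)$. In $I_1$ I bound each of $K(x,y)$ and $K(z,y)$ separately; using polar coordinates and the inclusion $B_{2\delta}(x)\subset B_{3\delta}(z)$, one gets
\bgs{\int_{B_{2\delta}(x)}|x-y|^{2s-n}\,dy\;=\;c_n\int_0^{2\delta}r^{2s-1}\,dr\;=\;c_{n,s}\,\delta^{2s},}
and analogously for the $z$-integral, so $|I_1|\le C\|f\|_\infty\delta^{2s}$. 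In $I_2$ the segment from $x$ to $z$ stays at distance $\ge|x-y|/2$ from $y$, so the mean value theorem gives $|K(x,y)-K(z,y)|\le C\delta\,|x-y|^{2s-n-1}$; since $\Omega$ is bounded and $2s-2<-1$, the integral $\int_{2\delta}^{\mathrm{diam}(\Omega)+|x|}r^{2s-2}\,dr$ is controlled by its lower endpoint, yielding $|I_2|\le C\|f\|_\infty\,\delta\cdot\delta^{2s-1}=C\|f\|_\infty\,\delta^{2s}$. Combining the two gives $u\in C^{0,2s}(\R^n)$.

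\textbf{Case $s\in(1/2,1)$.} Now $n-2s+1<n$, so the formal derivative $|x-y|^{-(n-2s+1)}$ is locally integrable against a bounded, compactly supported weight; dominated convergence justifies differentiation under the integral to yield
\bgs{Du(x)=-(n-2s)\int_{\Omega}f(y)\,\frac{x-y}{|x-y|^{n-2s+2}}\,dy.}
Setting $\sigma:=s-\tfrac12\in(0,\tfrac12)$, the vector-valued integrand has magnitude $|x-y|^{2\sigma-n}$ and its $x$-derivative has magnitude $|x-y|^{2\sigma-n-1}$. These are \emph{exactly} the size and derivative bounds used in the previous case, with $s$ replaced by $\sigma$. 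Rerunning the near/far split verbatim (on $B_{2\delta}(x)$ and its complement) then produces $|Du(x)-Du(z)|\le C\|f\|_\infty\,\delta^{2\sigma}=C\|f\|_\infty\,|x-z|^{2s-1}$, so $u\in C^{1,2s-1}(\R^n)$.

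\textbf{Main obstacle.} The only delicate point is the exponent bookkeeping in $I_2$: the far-field integral $\int_{2\delta}^R r^{2s-2}\,dr$ is dominated by its lower endpoint precisely when $2s-2<-1$, i.e.\ $s<1/2$, and this is exactly the condition that makes the final exponent come out to $\delta^{2s}$ rather than to a logarithmic loss (at $s=1/2$) or to the boundedness of the full gradient integral (at $s>1/2$). Once one has cleanly identified that the $s>1/2$ case reduces, via differentiation under the integral, to the $s<1/2$ regime with the shifted parameter $\sigma=s-1/2$, no further work is needed.
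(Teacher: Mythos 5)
Your proof is correct and follows essentially the same route as the paper's: split the difference $u(x)-u(z)$ into a near-singularity part (controlled by integrating the kernel itself in polar coordinates) and a far part (controlled by the mean value theorem applied to the kernel), and for $s>1/2$ differentiate under the integral and rerun the same estimate on the gradient kernel. Your presentation is a touch cleaner in two small ways — you fold the paper's $I_1+I_2$ into a single near-field term, and you observe explicitly that the $s>1/2$ case is the $s<1/2$ case with $\sigma=s-1/2$ — but these are cosmetic; the one point where the paper is slightly more careful is in justifying the derivative formula \eqref{fundsolderiv} (it cites Gilbarg--Trudinger rather than invoking dominated convergence directly, since the difference quotients require a short additional argument beyond mere local integrability of the formal gradient kernel).
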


The proof of this Lemma takes inspiration from \cite{russo}, where some bounds are obtained in the case $s=1/2$. Check also Lemma 3.1 in \cite{samko} for other considerations.
 
\begin{proof} Let $s<1/2$ be fixed. We consider $x_1,x_2 \in \Rn$ and denote by $\delta:=|x_1-x_2|$. We notice that in the course of the proof, the constants may change value from line to line. We have that
\eqlab{ \label{lem1eq1} |u(x_1)&-u(x_2)| \\\leq \al \int_{\Omega  }|f(y)| \bigg| \frac{1}{|x_1-y|^{n-2s}} -\frac{1}{|x_2-y|^{n-2s}} \bigg| \, dy \\
\leq \al \|f\|_{L^{\infty}(\Rn)} \Bigg[ \int_{\Omega \cap \{ |x_1-y|\leq 2\delta\} }\frac{dy}{|x_1-y|^{n-2s}} +  \int_{\Omega \cap \{ |x_1-y|\leq 2\delta\}} \frac{dy}{|x_2-y|^{n-2s}}  \\ \al 
+ 
 \int_{\Omega \setminus \{ |x_1-y|\leq 2\delta\}} \bigg| \frac{1}{|x_1-y|^{n-2s} }-\frac{1}{|x_2-y|^{n-2s}} \bigg|\, dy \Bigg]\\
 =:\al\|f\|_{L^{\infty}(\Rn)}\big(  I_1 +I_2+I_3 \big).  }
By passing to polar coordinates we have that 
 \[ I_1 \leq C_n  \int_0^{2\delta} \rho ^{2s-1} \, d\rho = C_{n,s} \delta^{2s}.\]
 At the same manner, noticing that $|x_2-y|\leq |x_2-x_1|+|x_1-y|\leq 3\delta$ we have 
 \[ I_2  \leq C_n   \int_0^{3\delta}\rho^{2s-1}\, d\rho = C_{n,s} \delta^{2s}.\]
 For $I_3$, we see that $|x_2-y|\geq|x_1-y|- |x_1-x_2|\geq \delta$. The function $|x-y|^{2s-n}$ is differentiable at each point of the segment $x_1x_2$ and using the mean value theorem we have that for $x^\star$ on the segment $x_1x_2$ 
 \[\left| \frac{1}{|x_1-y|^{n-2s}} -\frac{1}{|x_2-y|^{n-2s}} \right| \leq C_n \frac{|x_1-x_2|}{|x^\star-y|^{n-2s+1}}    =C_n \delta \frac{1}{|x^\star-y|^{n-2s+1}} .\]
It follows that
 \[ I_3\leq C_n \delta  \int_{\Omega \setminus \{ |x_1-y|\leq 2\delta\}}  \frac{1}{|x^\star-y|^{n-2s+1}}\, dy.\]
 Since $|x^\star-x_1| \leq \delta \leq \frac{1}2 |x_1-y|$, we have that $|x^\star-y|\geq \frac{1}2 |x_1-y|$. Passing to polar coordinates, since $2s<1$, we get that
 	\eqlab{ \label{forln1} I_3\leq &\;C_n\delta   \int_{2\delta}^{\infty}  \rho ^{2s-2}\, d\rho = C_{n,s} \delta^{2s}  .}
 	 	By inserting these bounds into \eqref{lem1eq1} we obtain that
\eqlab{\label{lem1eq2} |u(x_1)-u(x_2)| \leq C \delta^{2s},}where $C=C({n,s,f}) $ is a positive constant.  
To prove the bound for $s>1/2$, thanks to Lemma 4.1 in \cite{trudy} we have that
\eqlab{\label{fundsolderiv} D u(x)=\int_{\Omega} D \Phi(x-y) f(y)\, dy=\int_{\Omega} \frac{f(y) }{|x-y|^{n-2s+1}}\, dy.}
The proof then follows as for $s<1/2$, and one gets that
\bgs{ |D u(x_1) \al-D u(x_2)| 	\\ \leq \al \|f\|_{L^{\infty}(\Rn)} \Bigg[ \int_{\Omega \cap \{ |x_1-y|\leq 2\delta\} }\frac{dy}{|x_1-y|^{n-2s+1}} +  \int_{\Omega \cap \{ |x_1-y|\leq 2\delta\}} \frac{dy}{|x_2-y|^{n-2s+1}} \\ \al 
+ 
 \int_{\Omega \setminus \{ |x_1-y|\leq 2\delta\}}  \bigg| \frac{1}{|x_1-y|^{n-2s+1} }-\frac{1}{|x_2-y|^{n-2s+1}}\bigg|\, dy \\
 \leq \al C  \delta^{2s-1},} where $C=C({n,s,f})$ is a positive constant. 
 This concludes the proof of the Lemma.
\end{proof}
\begin{remark} 
On $\Omega$ one has the following bounds. For $x_1,x_2\in \Omega$
 \sys [|u(x_1)-u(x_2) |\leq] {&\; C |x_1-x_2|\Big(1+	|x_1-x_2|^{2s-1}\Big)    &\mbox{ for } \quad &s<  1/2\\
 &\;C|x_1-x_2| \Big(1+\big|\ln|x_1-x_2|\big|  \Big) & \mbox{ for } \quad &s= 1/2 ,} and
 \bgs{ |Du(x_1)-Du(x_2)|\leq C|x_1-x_2| \Big(1+|x_1-x_2|^{2s-2} \Big)&\mbox{\; for } \quad &s>  1/2,
 }
where $C=C(n,s,f, \Omega) $ is a positive constant depending on the dimension of the space, the fractional parameter $s$, the $L^{\infty}$ norm of $f$ and the diameter of $\Omega$.  

To see these, it is enough to modify \eqref{forln1} as follows. Denoting $R:=\mbox{diam } \Omega$, since $|x_1|<R$ for $s<1/2$ we get that 
\bgs{ I_3\leq &\;C_n \delta   \int_{2\delta}^{2R }  \rho ^{2s-2}\, d\rho = C_{n,s} \delta( \delta^{2s-1} - R^{2s-1})   \leq C_{n,s} \delta(\delta^{2s-1}+1).}
For $s=1/2$, we have $I_1, I_2$ are bounded by $C_{n,s} \delta$ and
 	 	\[I_3  \leq C_{n,s} \delta \ln \frac{R}{\delta} \leq C_{n,s,R} (1+|\ln \delta|).\]
 	 	From this and the bounds established in the proof of Lemma \ref{lem1}, the estimates in this remark plainly follow.
 	 	\end{remark} 
We prove that the Riesz potential is $C^{2s+\eps}(\Rn)$ when $f\in C^{0,\eps}_c(\Rn)$. 
The proof is included for completeness. 

\begin{lemma}  \label{lem12} Let $s\in (0,1)$ be fixed. Let $f\in C_c^{0,\eps}(\Rn)$ be a given function and $u$ be defined as
 \eqlab{ u(x):= \int_{\Rn} \frac{f(y)}{|x-y|^{n-2s}} \, dy.}Then 
 $u\in C^{2s+\eps}(\Rn)$. 
\end{lemma}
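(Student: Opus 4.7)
I would adapt the near/far splitting from the proof of Lemma \ref{lem1}, using the additional H\"older continuity of $f$ to gain an extra factor of $|x_1-x_2|^\eps$ in the modulus of continuity. The analysis splits according to whether $s<1/2$ (where we control $u$ directly) or $s>1/2$ (where we work with $Du$ via \eqref{fundsolderiv}); the borderline $s=1/2$ can be treated analogously with a logarithmic correction in the intermediate estimates.

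For $s<1/2$, set $\delta = |x_1-x_2|$ and aim for $|u(x_1)-u(x_2)| \leq C\delta^{2s+\eps}$. The central step is the representation
\begin{equation*}
u(x_1) - u(x_2) = \int_{\Rn} [f(y)-f(x_1)]\,[\Phi(x_1-y)-\Phi(x_2-y)]\, dy,
\end{equation*}
obtained by adding and subtracting $f(x_1)$ in the original difference integral and invoking the cancellation identity $\lim_{R\to\infty}\int_{B_R}[\Phi(x_1-y)-\Phi(x_2-y)]\, dy = 0$. This limit is verified by changing variables and noting that $\int_{B_R(x_1)}|w|^{-n+2s}dw - \int_{B_R(x_2)}|w|^{-n+2s}dw$ is, by a Taylor expansion on the sphere $|w|=R$, of order $O(\delta|c|R^{2s-2})$ with $c=(x_1+x_2)/2$, and hence vanishes since $s<1$. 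The integrand on the right is absolutely integrable for $s<1/2$, since $f$ has compact support and $|\Phi(x_1-y)-\Phi(x_2-y)| = O(\delta|y|^{-n+2s-1})$ at infinity. I would then split at $\{|y-x_1|=2\delta\}$: on the near piece, $|f(y)-f(x_1)|\leq L(2\delta)^\eps$ together with the triangle bound $|\Phi(x_1-y)-\Phi(x_2-y)| \leq |\Phi(x_1-y)|+|\Phi(x_2-y)|$ yields $CL\delta^{2s+\eps}$ after polar integration (as in Lemma \ref{lem1}); on the far piece, the mean-value estimate $|\Phi(x_1-y)-\Phi(x_2-y)| \leq C\delta|y-x_1|^{-n+2s-1}$ combined with $|f(y)-f(x_1)|\leq L|y-x_1|^\eps$ leads to the convergent integral $CL\delta\int_{2\delta}^{\infty}\rho^{2s-2+\eps}\, d\rho = CL\delta^{2s+\eps}$, where convergence uses $2s+\eps<1$.

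For $s>1/2$, the same scheme is applied to $Du(x) = \int f(y)\nabla\Phi(x-y)\, dy$, whose kernel satisfies $|\nabla\Phi(z)| \sim |z|^{-n+2s-1}$. The required cancellation $\lim_{R\to\infty}\int_{B_R}\nabla\Phi(x_i-y)\, dy = 0$ follows from the divergence theorem: the boundary integral $\int_{\partial B_R}\Phi(x_i-y)\hat\nu\, dS$ has its leading $O(R^{-n+2s})$ contribution annihilated by the sphere's vector symmetry, and the first correction is $O(|x_i|R^{2s-2})$, which tends to zero for $s<1$. Near/far splitting with $|\nabla\Phi(x_1-y)-\nabla\Phi(x_2-y)| \leq C\delta|y-x_1|^{-n+2s-2}$, together with the bound $\int_{B_{2\delta}}|z|^{-n+2s-1}dz \sim \delta^{2s-1}$ valid for $s>1/2$, then produces $|Du(x_1)-Du(x_2)| \leq C\delta^{2s-1+\eps}$.

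The main obstacle is verifying the cancellation identities for the improper integrals of the kernel differences. Without them, the decomposition $f(y)=f(x_1)+[f(y)-f(x_1)]$ leaves behind a residual $f(x_1)\cdot[V(x_1)-V(x_2)]$, where $V$ is the Riesz potential of $\chi_{\mathrm{supp}\,f}$; by Lemma \ref{lem1}, $V\in C^{0,2s}$ only, so the residual is $O(\delta^{2s})$, which is insufficient to reach $C^{2s+\eps}$. The cancellations eliminate this obstruction and allow the H\"older smallness of $f(y)-f(x_1)$ to be exploited uniformly, giving the desired sharp gain.
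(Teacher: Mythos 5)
Your proof is correct, and it takes a genuinely different route from the paper's in the one place where something nontrivial must happen. Both proofs open identically: subtract $f(x_1)$ from $f(y)$ and run a near/far split at $\{|y-x_1|=2\delta\}$ with the mean value theorem on the far piece; this handles what the paper calls $I_1,I_2,I_3$ and is routine. The difference is entirely in the residual $f(x_1)\int[\Phi(x_1-y)-\Phi(x_2-y)]\,dy$. The paper keeps the integration over a fixed ball $B_R\supset\mathrm{supp}\,f$, so the residual is genuinely nonzero, and it kills it by a boundary argument: since $f$ vanishes on $\partial B_R$, one has $|f(x_1)|\lesssim\mathrm{dist}(x_1,\partial B_R)^\eps$, and then a two-case split (near/far from $\partial B_R$ relative to $\delta$) combined with the quantitative estimate \eqref{sdom} borrowed from Diening--Samko gives $I_4=O(\delta^{2s+\eps})$. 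You instead integrate over all of $\Rn$ (legitimate because $f\in C^{0,\eps}_c$ is globally Hölder after extension by zero), and there the residual vanishes identically. Your Taylor-expansion-on-the-sphere argument is a valid way to see this, though there is an even shorter route you may want to record: for $s<1/2$ the difference $y\mapsto\Phi(x_1-y)-\Phi(x_2-y)$ is absolutely integrable on $\Rn$ and odd under the reflection $y\mapsto x_1+x_2-y$ (using that $\Phi$ is even), so its integral is exactly $0$; and for $s>1/2$ the difference of gradients $\nabla\Phi(x_1-y)-\nabla\Phi(x_2-y)$ is also absolutely integrable (decay $O(\delta|y|^{-n+2s-2})$), so its vanishing integral is likewise unconditional. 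What each approach buys: your cancellation is more elementary and self-contained (no external potential-theoretic lemma, no boundary-distance case split), but it is specific to the case $\Omega=\Rn$ where $f$ extends to a global Hölder function; the paper's argument is the one that generalizes to Riesz potentials over bounded domains $\Omega$ with the $s$-property, which is the setting the paper comments on in the remark following the lemma. One small caveat, which you flag yourself: your far-piece bound for $s<1/2$, $\delta\int_{2\delta}^\infty\rho^{2s-2+\eps}\,d\rho$, needs $2s+\eps<1$; this is consistent with the paper's standing convention that $\eps$ is small, but worth stating.
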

\begin{proof}
Let $R>0$ such that $\mbox{supp } f\subseteq B_R$ and let $s<1/2$. Then taking $x_1, x_2 \in \Rn$ and denoting by $\delta:=|x_1-x_2|$ we have that
\bgs{ |u(x_1)&-u(x_2)| \\\leq \al \int_{B_R \cap \{ |x_1-y|\leq 2\delta\} }\frac{|f(y)-f(x_1)|}{|x_1-y|^{n-2s}} \, dy+  \int_{B_R \cap \{ |x_1-y|\leq 2\delta\}} \frac{|f(y)-f(x_1)|}{|x_2-y|^{n-2s}} \, dy\\ \al 
+ 
 \int_{B_R \setminus \{ |x_1-y|\leq 2\delta\}} |f(y)-f(x_1)| \bigg| \frac{1}{|x_1-y|^{n-2s} }-\frac{1}{|x_2-y|^{n-2s}}\bigg|\, dy  \\ \al+ |f(x_1)|\bigg|\int_{B_R} \lr{ \frac{1}{|x_1-y|^{n-2s} }-\frac{1}{|x_2-y|^{n-2s}}} dy \bigg|\\
 =\al I_1 +I_2+I_3 +I_4.  }
 Since $f$ is H{\"o}lder continuous, we have that for $C>0$
 \[ |f(y)-f(x_1)|\leq C |y-x_1|^{\eps}.\] Noticing that in the next computations the constants may change value form line to line, we obtain that
 \bgs{ & I_1 \leq C  \int_{B_R \cap \{ |x_1-y|\leq 2\delta\} } |x_1-y|^{-n+2s+\eps}\, dy = C_{n,s} \delta^{2s+\eps},\\
 &I_2 \leq  C\int_{B_R \cap \{ |x_1-y|\leq 2\delta\} } |x_1-y|^\eps |x_2-y|^{-n+2s}\, dy \leq C_{n,s} \delta^{2s+\eps}}
 since $|x_2-y| \leq 3\delta$ and for $x^\star $ on the segment $x_1\, x_2$, recalling that $s<1/2$ 
   \bgs{  I_3 \leq  C  \delta \int_{B_R \setminus   \{ |x_1-y|\leq 2\delta\} } \frac{|x_1-y|^{\eps}}{|x^\star -y|^{n-2s+1}}  \, dy  \leq C_{n,s} \delta^{2s+\eps}, }
   given that $|x^\star -y|\geq \frac{1}2 |x_1-y|$.

Now, if $x_1\notin B_R$ or $x_2\notin B_R$ (it is enough in this latter case to replace $x_1$ with $x_2$ in the above computations), then we are done. Else, for  $x_1,x_2 \in B_R$, suppose that $\mbox{dist}(x_1,\partial B_R) \geq  \mbox{dist}(x_2,\partial B_R)  $ and take $p\in \partial B_R$ (hence $f(p)=0$) such that $\mbox{dist} (x_1,\partial B_r)=|x_1-p|$. So
 \[|f(x_1)|=|f(x_1)-f(p)|\leq C  |x_1-p|^{\eps} \]  and we distinguish two cases. When
 \[  (1) \quad |x_1-x_2| \geq \frac{1}2|x_1-p|\]   we use the result in Lemma \ref{lem1}, observing that from \eqref{lem1eq1} and \eqref{lem1eq2} we have that
 \[ \int_{B_R }\bigg| \frac{1}{|x_1-y|^{n-2s}}-\frac{1}{|x_2-y|^{n-2s}}\bigg|  \, dy \leq C |x_1-x_2|^{2s} ,\]
 hence 
 \[ I_4\leq  C|x_1-p|^{\eps}  |x_1-x_2|^{2s} \leq C \delta^{2s+\eps}.\] On the other hand, when   \[ (2)\quad |x_1-x_2| \leq \frac{1}2 |x_1-p|\]  we use the following bound (see Lemmas 2.1 and 3.5 in \cite{samko})
 \eqlab{\label{sdom} \bigg|\int_{B_R }|x_1-y|^{2s-n}-|x_2-y|^{2s-n} \, dy\bigg| \leq \frac{|x_1-x_2|}{\max\{\mbox{dist}(x_1,\partial B_R) ,\mbox{dist}(x_2,\partial B_R) \}^{1-2s} }.} Since $2s+\eps-1 <0$ we get that
\bgs{ I_4 \leq C \delta |x_1-p |^{2s-1+\eps} \leq C_{n,s} \delta^{2s+\eps}.}
 This concludes the proof of the Lemma for $s<1/2$. 
 In order to prove the result for $s\geq1/2$, one considers the formula \eqref{fundsolderiv} and iterates the computations above.
   \end{proof}
The interested reader can see Theorem 4.6 in \cite{samko}, where the result given here in Lemma \ref{lem12} is proved for $u$ defined as
\[ u(x)= \int_{\Omega} \frac{f(y)}{|x-y|^{n-2s}} \, dy,\] where $\Omega$ is a domain with the $s$-property (see Definition 3.3 therein). In particular, these domains are defined such that they satisfy a bound of the type given in \eqref{sdom}, while the ball is the typical example of this type.  
 
 \section{Some useful estimates}
 
 In this section we introduce some useful estimates, using the representation formulas \eqref{poissonrepr} and \eqref{fondsolrepr}. The interested reader can also check \cite{fall}, where Cauchy-type estimates for the derivatives of $s$-harmonic functions are proved using the Riesz and Poisson kernel.
 
 We fix $r>0$.
 \begin{lemma}\label{estimate1} Let $u\in  L^\infty(\Rn)\cap C(\Rn\setminus B_r) $ be such that $\frlap u(x)=0$  for any $x$ in $B_r$. Then for any $\alpha \in \N^n_0$
 	\eqlab{\label{estimate1} \|D^\alpha u \|_{L^{\infty}(B_{r/2})} \leq c r^{-|\alpha|} \|u\|_{L^\infty(\Rn\setminus B_r)},} where $c=c(n,s,\alpha)$ is a positive constant.
 \end{lemma}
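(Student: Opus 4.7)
The plan is to reduce to the unit ball by scaling and then differentiate the Poisson representation \eqref{poissonrepr} under the integral sign, controlling the resulting kernel.

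First I would scale away the radius. Setting $v(x):=u(rx)$, one has $(-\Delta)^s v(x)=r^{2s}((-\Delta)^s u)(rx)=0$ in $B_1$, and $\|v\|_{L^\infty(\R^n\setminus B_1)}=\|u\|_{L^\infty(\R^n\setminus B_r)}$. Because $D^\alpha v(x)=r^{|\alpha|}(D^\alpha u)(rx)$, proving the case $r=1$ yields \eqref{estimate1} for arbitrary $r>0$ with the correct $r^{-|\alpha|}$ factor. So I reduce to showing $\|D^\alpha u\|_{L^\infty(B_{1/2})}\le c\|u\|_{L^\infty(\R^n\setminus B_1)}$ when $(-\Delta)^s u=0$ in $B_1$.

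Next, by \eqref{poissonrepr} applied on $B_1$ one writes
\[
u(x)=\int_{\R^n\setminus B_1}P_1(y,x)\,u(y)\,dy,\qquad P_1(y,x)=c(n,s)\bigl(1-|x|^2\bigr)^s\bigl(|y|^2-1\bigr)^{-s}|x-y|^{-n}.
\]
The plan is to commute $D_x^\alpha$ with the integral and bound the new kernel uniformly for $x\in B_{1/2}$. Writing $P_1(y,x)=c(n,s)g(x)h(y)k(x,y)$ with $g(x)=(1-|x|^2)^s$, $h(y)=(|y|^2-1)^{-s}$ and $k(x,y)=|x-y|^{-n}$, the Leibniz rule gives
\[
|D_x^\alpha P_1(y,x)|\le C\,h(y)\sum_{\beta\le\alpha}|D^\beta g(x)|\,|D_x^{\alpha-\beta}k(x,y)|.
\]
For $x\in B_{1/2}$ one has $1-|x|^2\ge 3/4$, so $g$ is $C^\infty$ there and $|D^\beta g(x)|\le C(|\alpha|,s)$; similarly $|D_x^{\alpha-\beta}k(x,y)|\le C|x-y|^{-n-|\alpha-\beta|}$.

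It remains to check that, uniformly in $x\in B_{1/2}$,
\[
J_k(x):=\int_{\R^n\setminus B_1}\frac{dy}{(|y|^2-1)^{s}\,|x-y|^{n+k}}\ <\ \infty,\qquad k=0,1,\dots,|\alpha|,
\]
which I split into $\{1<|y|\le 2\}$ and $\{|y|>2\}$. On the annulus $1<|y|\le 2$ the factor $|x-y|\ge 1/2$ keeps $|x-y|^{-n-k}$ bounded, while $(|y|^2-1)^{-s}\sim(|y|-1)^{-s}$ is integrable because $s<1$. On $|y|>2$ one has $|x-y|\ge |y|/2$, and also $|y|^2-1\ge |y|^2/2$, so the integrand is bounded by $C|y|^{-n-k-2s}$, which is integrable for any $s>0$, $k\ge 0$. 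Thus $J_k(x)\le C_{n,s,\alpha}$ uniformly for $x\in B_{1/2}$, and absolute convergence of the differentiated integral also justifies the interchange of $D_x^\alpha$ and $\int$. Combining yields
\[
|D^\alpha u(x)|\le \|u\|_{L^\infty(\R^n\setminus B_1)}\int_{\R^n\setminus B_1}|D_x^\alpha P_1(y,x)|\,dy\le c(n,s,\alpha)\|u\|_{L^\infty(\R^n\setminus B_1)},
\]
which is the desired $r=1$ estimate. The only mildly delicate point is the integrability near $|y|=1$; the condition $s<1$ provides exactly the room needed, and the bound $|x-y|\ge 1/2$ on $B_{1/2}\times(\R^n\setminus B_1)$ removes the only potential singularity of the $|x-y|^{-n-k}$ factor.
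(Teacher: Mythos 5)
Your proof is correct and follows essentially the same route as the paper: reduce to $r=1$ by scaling, write $u$ via the Poisson representation \eqref{poissonrepr}, differentiate under the integral, and bound the resulting kernel uniformly for $x\in B_{1/2}$ using $1-|x|^2\ge 3/4$ and $|x-y|\gtrsim\max\{1,|y|\}$. Your use of the Leibniz rule to handle general $\alpha$ merely makes explicit what the paper dispatches with ``by reiterating the computation.''
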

 \begin{proof}
 We notice that it is enough to prove \eqref{estimate1} for $r=1$, i.e.
 \eqlab{\label{estimate16} \|D^\alpha u \|_{L^{\infty}(B_{1/2})} \leq c \|u\|_{L^\infty(\Rn\setminus B_1)}.}   Indeed, if \eqref{estimate16} holds, then by rescaling, namely letting $y=rx$ and $v(y)=u(x)$   for $x\in B_1$, we have that  $D^\alpha u(x) =r^{|\alpha|}D^\alpha v(y) $. Hence $r^{|\alpha|}| D^\alpha v(y)|=|D^\alpha u(x)|\leq c\|u\|_{L^{\infty}(\Rn\setminus B_1) }= c\|v\|_{L^\infty(\Rn\setminus B_r)}$  and one gets the original estimate for any $r$.
 
 We use the representation formula given in \eqref{poissonrepr}. By inserting definition \eqref{poisson}, we have that in $B_1$
 \bgs{ u(x) = &\;\int_{\Rn \setminus B_1} u(y) P_1(y,x)\, dy\\
 =&\; c(n,s) \int_{\Rn \setminus B_r} u(y)\frac{(1-|x|^2)^s}{(|y|^2-1)^s} \frac{dy}{|x-y|^n}.}
 Let $x\in B_{1/2}$. We take the j$^{th}$ derivative of $u$ and have that
  \bgs{ &\;  D_j u(x) \\ =&\; c(n,s) \int_{\Rn \setminus B_1} u(y)D_j \lrq{ \frac{(1-|x|^2)^s}{(|y|^2-1)^s} \frac{1}{|x-y|^n}}\, dy\\
   =&\; c(n,s) \int_{\Rn \setminus B_r}  \frac{u(y)}{(|y|^2-1)^s} \lrq{ \frac{(-2sx_j) (1-|x|^2)^{s-1} }{|x-y|^n} + (-n) \frac{(1-|x|^2)^s (x_j-y_j)}{|x-y|^{n+2}} \, }dy.}
   Therefore renaming the constants (even from line to line),
   \eqlab{ \label{derivb}|Du(x)|\leq c_{n,s} \int_{\Rn\setminus B_1} \frac{|u(y)|}{(|y|^2-1)^s} \lrq{\frac{|x|(1-|x|^2)^{s-1} }{|x-y|^n} + \frac{(1-|x|^2)^s}{|x-y|^{n+1}} } \, dy.}
Given that $|x|\leq 1/2$ we have that $ 3 /4 \leq 1-|x|^2 \leq 1$ and $|x-y|\geq |y|/2$ and so 
   \bgs{|Du(x)|\leq c_{n,s}\|u\|_{L^\infty(\Rn\setminus B_1)}  \int_{\Rn\setminus B_1} \lrq{ \frac{1}{(|y|-1)^s |y|^n} +\frac{1}{(|y|-1)^s |y|^{n+1}} } dy.}
   Passing to polar coordinates and renaming the constants, we have that
   \bgs{|Du(x)|\leq c_{n,s}\|u\|_{L^\infty(\Rn\setminus B_1)} \lrq{ \int_1^\infty (\rho-1)^{-s}\rho^{-1}\, d\rho + \int_1^\infty (\rho-1)^{-s}\rho^{-2}\, d\rho}.}
 Now we compute
 \[\int_1^\infty (\rho-1)^{-s}\rho^{-1}\, d\rho =\int_1^{2} (\rho-1)^{-s}\rho^{-1}\, d\rho+\int_{2}^\infty (\rho-1)^{-s}\rho^{-1}\, d\rho \leq C \] and likewise,
  \[\int_1^\infty (\rho-1)^{-s}\rho^{-2}\, d\rho  \leq C .\]
  It follows that 
  \bgs{|Du(x)|\leq c_{n,s} \|u\|_{L^\infty(\Rn\setminus B_1)} \quad \mbox{ for any } x\in B_{1/2}.} By reiterating the computation, we obtain the conclusion for the $\alpha$ derivative. This proves the estimate \eqref{estimate16}, thus \eqref{estimate1} by rescaling.
 \end{proof}
 
 \begin{lemma}\label{estimate2} Let $f\in C^{0,\eps}( B_r)\cap C(\overline B_r)$ be a given function and $u\in C^1(B_r)\cap L^{\infty}(\Rn)$ be a pointwise solution of
\[\begin{cases}
		    \frlap u= f   \qquad &\mbox{ in } {B_r} , 
 		\\  u= 0   \qquad &\mbox{ in } {\Rn \setminus B_r}. 
	\end{cases}\] 
 					Then
 					\eqlab{\label{estimate2} \|u\|_{L^{\infty}(B_r)}\leq c r^{2s}\sup_{\overline B_r}|f|,}
 					where $c=c(n,s)$ is a positive constant. Furthermore, for $s>1/2$ 
 					\eqlab{\label{estimate3} \|Du\|_{L^{\infty}(B_{r/2})}\leq \overline c r^{2s-1} \sup_{\overline B_r}|f|,} where $\overline c=\overline c(n,s)$ is a positive constant.
  \end{lemma}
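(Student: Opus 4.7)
The plan is to reduce both bounds to the scale $r=1$ by a dilation, to prove the $L^\infty$ bound \eqref{estimate2} via the classical explicit barrier on the unit ball, and to deduce the gradient bound \eqref{estimate3} by splitting $u$ into a Riesz--potential piece and an $s$--harmonic remainder to which Lemma~\ref{estimate1} applies. Setting $\tilde u(y):=u(ry)$ one has $\frlap \tilde u(y)=r^{2s}f(ry)$ in $B_1$ with $\tilde u\equiv 0$ on $\Rn\setminus B_1$, and $|D\tilde u(y)|=r|Du(ry)|$, so a proof at scale $1$ with $\sup|\tilde f|=r^{2s}\sup|f|$ immediately produces the scaling factors $r^{2s}$ and $r^{2s-1}$ claimed in the statement.

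For \eqref{estimate2} I would use the explicit torsion-type function $\Psi(x):=\kappa_{n,s}^{-1}(1-|x|^2)_+^s$, where $\kappa_{n,s}>0$ is the classical Getoor constant satisfying $\frlap\bigl[(1-|x|^2)_+^s\bigr]\equiv\kappa_{n,s}$ in $B_1$. With $M:=\sup_{\overline B_1}|f|$, the functions $M\Psi\pm u$ satisfy $\frlap(M\Psi\pm u)\ge 0$ in $B_1$ and vanish on $\Rn\setminus B_1$, so the weak maximum principle (a direct consequence of the Poisson representation \eqref{poissonrepr}, which forces any $s$-superharmonic function non-negative outside to be non-negative inside) forces $|u|\le M\Psi\le M/\kappa_{n,s}$ in $B_1$.

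For \eqref{estimate3} (with $s>1/2$) I would write $u=v+w$ with
\[
v(x):=\int_{B_1}\Phi(x-y)f(y)\,dy,\qquad w:=u-v.
\]
Differentiating $v$ under the integral sign as in \eqref{fundsolderiv} produces $|Dv(x)|\le C_{n,s}\sup|f|\int_{B_1}|x-y|^{-n+2s-1}\,dy$, and since $2s-1>0$ this integral is uniformly bounded in $x\in\Rn$, so $\|v\|_{L^\infty(\Rn)}+\|Dv\|_{L^\infty(\Rn)}\le C_{n,s}\sup|f|$. Granted the identity $\frlap v=f$ in $B_1$, the remainder $w$ is $s$-harmonic in $B_1$ and bounded on $\Rn$ by $C\sup|f|$ thanks to \eqref{estimate2}; Lemma~\ref{estimate1} with $|\alpha|=1$ then gives $\|Dw\|_{L^\infty(B_{1/2})}\le C\|w\|_{L^\infty(\Rn\setminus B_1)}\le C\sup|f|$, and summing the two contributions yields \eqref{estimate3}.

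The main technical hurdle is the pointwise identity $\frlap v=f$ in $B_1$, because the density $f\chi_{B_1}$ is only $L^\infty$ across $\partial B_1$ and Lemma~\ref{lem12} does not apply to it globally. I would localise at each $x_0\in B_1$: choose a cutoff $\eta\in C_c^\infty(B_1)$ with $\eta\equiv 1$ near $x_0$, split $f\chi_{B_1}=\eta f+(1-\eta)f\chi_{B_1}$ and correspondingly $v=v_1+v_2$. Lemma~\ref{lem12} applies to the Hölder-continuous compactly supported density $\eta f$ and gives $\frlap v_1(x_0)=\eta(x_0)f(x_0)=f(x_0)$; the second piece $v_2$ is smooth at $x_0$ since its density vanishes in a neighbourhood of $x_0$, and a direct computation (using that $\Phi$ is the fundamental solution) gives $\frlap v_2(x_0)=0$. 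Adding the two confirms $\frlap v(x_0)=f(x_0)$ and completes the plan.
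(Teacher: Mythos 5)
Your argument is correct but follows a genuinely different route from the paper's. For \eqref{estimate2} the paper does not use a barrier or maximum principle at all: it builds a compactly supported H\"older extension $\tilde f$ of $f$, sets $\tilde u=\tilde f*\Phi$, notes that $\tilde u-u$ is $s$-harmonic in $B_1$, and then bounds $\|\tilde u\|_{L^\infty}$ and $\|\tilde u-u\|_{L^\infty}$ separately (the latter through the Poisson representation \eqref{poissonrepr}), obtaining \eqref{estimate2} by the triangle inequality. Your torsion-function barrier $M\Psi\pm u$ is shorter and more elementary, though you should be aware that the weak maximum principle you invoke needs $u$ to have one-sided semicontinuity up to $\partial B_1$ to locate a hypothetical interior extremum; the paper's own use of the Poisson representation to \emph{identify} $\tilde u-u$ with a Poisson integral hides an equivalent uniqueness/comparison fact, so this is not a defect peculiar to your version. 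For \eqref{estimate3} the paper reuses the same $\tilde u$-decomposition, differentiating both the Poisson integral (as in \eqref{derivb}) and the Riesz potential $\tilde u$, whereas you decompose $u=v+w$ with $v=\Phi*(f\chi_{B_1})$ and apply Lemma~\ref{estimate1} to the $s$-harmonic remainder $w$. That is a clean idea, but because $f\chi_{B_1}$ is not H\"older across $\partial B_1$ you are forced into the cutoff/localisation step to justify $\frlap v=f$ pointwise in $B_1$; the paper's extension of $f$ to $\tilde f\in C^{0,\eps}_c(\Rn)$ exists precisely to make Lemma~\ref{lem12} apply directly and avoid that extra step. Net effect: your route is leaner for the $L^\infty$ bound, the paper's is tidier for the gradient bound; both deliver the same constants up to renaming.
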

 \begin{proof}
  We notice that it is enough to prove \eqref{estimate2} and \eqref{estimate3} for $r=1$, i.e.
 \eqlab{\label{estimate26} \ \|u\|_{L^{\infty}(B_1)}\leq c \sup_{\overline B_1}|f|} and\eqlab{\label{estimate36} \|Du\|_{L^{\infty}(B_{1/2})}\leq \overline c \sup_{\overline B_1}|f|.}
    Indeed, by rescaling, we let $y=rx$ and  $v(y)=u(x)$ we have that  $\frlap v(y)=r^{-2s}\frlap u (x)$, while $r D v (y) =D u(x)$  and one gets the original estimates for any $r$.
    
 We take $\tilde f$ to be a continuous extension of $f$, namely let $\tilde f \in C^{0,\eps}_c(\Rn) $ be such that
 \sys[ \tilde f=]{ & f &\mbox{ in } &B_1 \\
 				&0 &\mbox{ in } &\Rn \setminus B_{3/2}} and $\sup_{\Rn}| \tilde f|\leq C \sup_{ \overline B_1} |f|$. 
 Let
 \eqlab{ \label{defu0} \tilde u(x):=\tilde f* \Phi(x) =a(n,s)\int_{\Rn} \frac{\tilde f(y)}{|x-y|^{n-2s}}\, dy.}
Then $\tilde u \in L_s^1(\Rn)$ (see Theorem 2.3 in \cite{bucur} for the proof) and  $\tilde u \in C^{2s+\eps}(\Rn)$, according to Lemma \ref{lem12}. Thanks to \eqref{fondsolcon}, we have that 
 $ \frlap \tilde u=\tilde f	$ pointiwse in $\Rn$. Hence, thanks to the definition of $\tilde f$,  $ \frlap (\tilde u-u)=0$ in $B_1$. Moreover, $\tilde u-u =\tilde u$ in $\Rn \setminus B_1$ and from \eqref{poissonrepr} we have that in $B_1$
\eqlab{\label{defu01}  (\tilde u-u)(x) =\int_{\Rn\setminus B_1} \tilde u(y)P_1(y,x)\, dy.}	We notice at first that  by definition \eqref{defu0} and passing to polar coordinates, we obtain for any positive constant $\tilde c$ that
 \eqlab{  \label{u01} \|\tilde u\|_{L^{\infty}(B_{\tilde c } )} \leq a_{n,s} \sup_{\Rn} |\tilde f| \int_0^{\tilde c +3/2}\rho^{2s-1}\, d\rho \leq c_{n,s}  \sup_{\overline B_1}|f| .}
By renaming constants, we also have that
 \eqlab{ \label{thing2} \|\tilde u-u\|_{L^{\infty}(B_1)} \leq &\;\int_{ B_{2}\setminus B_1} |\tilde u(y)| P_1(y,x)\, dy + \int_{\Rn\setminus   B_{2}} |\tilde u(y)| P_1(y,x)\, dy \\
 \leq &\; \|\tilde u\|_{L^{\infty}(B_{2})} + I\\
 \leq &\; c_{n,s}  \sup_{\overline B_1}|f|  + I.}
 Inserting the definition \eqref{poisson} and using for $|y|\geq 2$ the bounds $|y-x|\geq |y|/2$ and $|y|^2-1\geq |y|^2/2$ we have that
 \bgs{ I\leq &\;c(n,s)   \int_{\Rn\setminus   B_{2}}  \frac {|\tilde u(y)|}{(|y|^2-1)^s |x-y|^n}\, dy\\
 \leq &\; c_{n,s}    \int_{\Rn\setminus   B_{2}} \frac {|\tilde u(y)|}{|y|^{n+2s} }\, dy .} 
 We estimate the $L_s^1$ norm of $\tilde u$ as follows
 \eqlab{\label{thing6} \|\tilde u\|_{L_s^1(\Rn\setminus B_{2})} = &\; \int_{\Rn\setminus B_{2}} \frac{|\tilde u(y)|}{|y|^{n+2s}}\, dy\\
 \leq &\;a(n,s)\int_{\Rn\setminus B_{2}} |y|^{-n-2s} \lrq{ \int_{B_{3/2}} \frac{|\tilde f(t)|}{|y-t|^{n-2s}} \, dt} \,dy\\
 \leq &\; a(n,s)\sup_{\Rn}|\tilde f|  \int_{B_{3/2}}  \lrq{ \int_{\Rn\setminus B_{2}} |y|^{-n-2s}|y-t|^{2s-n}\, dy}\, dt.}
 We use that $|y-t|\geq |y|/4$ and passing to polar coordinates we get that
  \eqlab{\label{thing66} \|\tilde u\|_{L_s^1(\Rn\setminus B_{2})} \leq  a_{n,s}\sup_{\Rn}|\tilde f|    \int_{2}^\infty \rho^{-n-1} \, d\rho =a_{n,s} \sup_{\overline B_1}| f|  .}
Hence 
 \[ I \leq  c_{n,s}  \sup_{\overline B_1} |f|.\]
 It follows in \eqref{thing2} (eventually renaming the constants) that
\eqlab{ \|\tilde u-u\|_{L^{\infty}(B_1)} \leq  c_{n,s}\sup_{\overline B_1} |f|. \label{fb1} }

 By the triangle inequality, we have that 
 \bgs{ \|u\|_{L^{\infty}(B_1)} \leq \|\tilde u\|_{L^{\infty}(B_1)}  +\|\tilde u-u\|_{L^{\infty}(B_1)} .}
Hence by using \eqref{u01} and \eqref{fb1} we obtain that
\[ \|u\|_{L^{\infty}(B_1)} \leq c_{n,s} \sup_{\overline B_1} |f|,\]
that is the desired estimate \eqref{estimate26}, hence \eqref{estimate2} after rescaling.

In order to prove \eqref{estimate36}, we take $x\in B_{1/2}$ and obtain by the triangle inequality that
\eqlab{ \label{thing7} |Du(x)|\leq |D(\tilde u-u)(x)|+|D\tilde u(x)|.}
We notice that in the next computations the constants may change value from line to line.
 By using \eqref{defu01} and \eqref{derivb}, for $|x|\leq 1/2$ (hence $|y-x|\geq |y|/2$) we obtain that
 \eqlab{\label{thing4} |D(\tilde u-u)(x)|\leq &\; c_{n,s} \int_{\Rn\setminus B_1}\frac{|\tilde u(y)|}{(|y|^2-1)^s|y|^{n}}\, dy \\ &\; +c_{n,s} \int_{\Rn\setminus B_1}\frac{|\tilde u(y)|}{(|y|^2-1)^s|y|^{n+1}}\, dy \\
 =&\; c_{n,s} (I_1+I_2) .}
 We compute by passing to polar coordinates that
 \bgs{  \int_{B_{2}\setminus B_1}\frac{|\tilde u(y)|}{(|y|^2-1)^s|y|^{n}}\, dy  \leq c_{n,s} \|\tilde u\|_{L^{\infty}(B_{2 })} \leq c_{n,s} \sup_{\overline B_1} |f|, } according to \eqref{u01}
 Moreover, for $|y|\geq 2 $ we have that $|y|^2-1\geq |y|^2/2$ and so
 \bgs{ \int_{\Rn\setminus B_{2 }}\frac{|\tilde u(y)|}{(|y|^2-1)^s|y|^{n}}\, dy \leq   \int_{\Rn\setminus B_{2 }}\frac{|\tilde u(y)|}{|y|^{2s+n}}\, dy  \leq a_{n,s} \sup_{\overline B_1}|f|}
 thanks to \eqref{thing6} and \eqref{thing66}. Hence \[I_1\leq c_{n,s} \sup_{\overline B_1}|f|.\]
 We split also integral $I_2$ into two and by passing to polar coordinates, we get that
 \bgs{ \int_{B_{2 }\setminus B_1} \frac{|\tilde u(y)|}{(|y|^2-1)^s|y|^{n+1}}\, dy \leq &\; c_{n,s} \|\tilde u\|_{L^\infty(B_{2 })} \leq c_{n,s}   \sup_{\overline B_1}|f| } again by \eqref{u01}. 
 Also, using definition \eqref{defu0} of $\tilde u$ and for $|y|\geq2$ the fact that $|y|^2-1\geq |y|^2/2$, we get
 \[  \int_{\Rn \setminus B_{2 }} \frac{|\tilde u(y)|}{(|y|^2-1)^2|y|^{n+1}}\, dy  \leq a(n,s) \int_{\Rn \setminus B_{2 }}  |y|^{-n-2s-1} \lrq{\int_{B_{3/2}} \frac{|\tilde f(t)|}{|y-t|^{n-2s}} \, dt} \, dy.\] We have that $|y-t|\geq |y|/4$ and obtain that
 \[  \int_{\Rn \setminus B_{2 }} \frac{|\tilde u(y)|}{(|y|^2-1)^2|y|^{n+1}}\, dy  \leq a_{n,s}    \sup_{\overline B_1}|f|.\]
 It follows that 
 \bgs{ I_2\leq c_{n,s}   \sup_{\overline B_1}|f|.} Inserting the bounds on $I_1$ and $I_2$ into \eqref{thing4}, we finally obtain that
 \eqlab{\label{thing5} |D(\tilde u -u)(x)|\leq c_{n,s}  \sup_{\overline B_1}|f|.}
 On the other hand, for $s>1/2$, using \eqref{fundsolderiv} we get that
 \[ D\tilde u(x)=a(n,s) \int_{B_{3 /2}} \frac{\tilde f(y) }{|x-y|^{n-2s+1} }\, dy\] and therefore by passing to polar coordinates 
 \bgs{ |D\tilde u(x)|\leq &\;a_{n,s} \sup_{\overline B_1 }|f| \int_{B_{3 /2}} |x-y|^{2s-n-1}\, dy \leq a_{n,s} \sup_{\overline B_1 }|f|  \int_0^{2 } \rho^{2s-2}\, d\rho   \\&\;=a_{n,s}   \sup_{\overline B_1 } |f| .}
 This and \eqref{thing5} finally allow us to conclude from \eqref{thing7} that
 \[ |Du(x)|\leq \overline c   \sup_{\overline B_1} |f|\] for any $x\in B_{1/2}$, therefore the bound in \eqref{estimate36}. From this after rescaling, we obtain the  estimate in \eqref{estimate3}. 
   \end{proof}

 \section{A proof of Schauder estimates} 
In this section we give a simple proof of some Schauder estimates related to the fractional Laplacian, as stated in Theorem \ref{schthm}.
As we see by substituting in \eqref{mm1} and \eqref{mm} that $\omega(r)\leq C r^\alpha$, we obtain for $s\leq 1/2$ 
\[ |u(x)- u(y) |\leq  C_{n,s}\delta \lr{  \|u\|_{L^{\infty}(\Rn\setminus B_1)} + \sup_{\overline B_1}|f|   + \delta^{\alpha+2s-1}},\] 
hence $u\in C^{0,2s+\alpha}(B_{1/2})$ as long as $\alpha <1-2s$ and Lipschitz if $\alpha>1-2s$. For $s>1/2$ we have that
	\[ |Du(x)-D u(y) |\leq  C_{n,s}\delta \lr{  \|u\|_{L^{\infty}(\Rn\setminus B_1)} + \sup_{\overline B_1}|f|   + \delta^{\alpha+2s-2}}.\] Hence if $\alpha \leq 2-2s$ then $u\in C^{1,\alpha+2s-1}(B_{1/2})$ while for  $2-2s\leq \alpha <1$ the derivative $Du$ is Lipschitz in $B_{1/2}.$ 
The proof takes its inspiration from \cite{wang}, where a similar result is proved for the classical case of the Laplacian. 

We prove here the case $s>1/2$, noting that for $s\leq 1/2$ the proof follows in the same way, using the lower order estimates.
\begin{proof}[Proof of Theorem \ref{schthm}] For $k=1,2,\dots$, we denote by $B_k:=B_{\rho^{k}}(0)$, where $\rho =1/2$ and let $u_k$ be a solution of 
\sys{&\frlap u_k = f(0) &\mbox{ in } &B_k\\
	&u_k=u &\mbox{ in } &\Rn \setminus B_k.}
Then we have that
	\sys{&\frlap (u_k-u) = f(0)-f &\mbox{ in } &B_k\\
	&u_k-u=0 &\mbox{ in } &\Rn \setminus B_k.}
	We remark that in the next computations, the constants may change value from line to line. 
	
	Thanks to \eqref{estimate2}, we get that
	\eqlab{\label{my1} \|u_k-u\|_{L^{\infty}(B_k)} \leq &\;c_{n,s} \rho^{2ks}\sup_{B_k} |f(0)-f|\\
	\leq &\; c_{n,s} \rho^{2ks} \omega(\rho^k).}
		Using \eqref{estimate3}, we obtain that
	\eqlab{\label{my3} \|D (u_k-u)\|_{L^\infty(B_{k+1})} \leq c_{n,s} \rho^{(2s-1)k}\omega(\rho^k).} From here, sending $k$ to infinity, for $s>1/2$ it yields that
	\eqlab{ \label{lim1} \lim_{k \to \infty} D u_k(0) =D u(0).} 
Furthermore,
\sys{&\frlap (u_k-u_{k+1}) =0 &\mbox{ in } &B_{k+1}\\
	&u_k-u_{k+1}=u_k-u  &\mbox{ in } &B_k \setminus B_{k+1}\\
	&u_k-u_{k+1}=0 &\mbox{ in } &\Rn \setminus B_k,} hence from \eqref{estimate1} we have that
	\[ \|D(u_k-u_{k+1})\|_{L^{\infty}(B_{k+2})} \leq c_{n,s}  \rho^{-(k+1)} \sup_{B_k\setminus B_{k+1}} |u_k-u|\] and
	\bgs{ \|D^2(u_k-u_{k+1})\|_{L^{\infty}(B_{k+2})} \leq c_{n,s}  \rho^{-2(k+1)} \sup_{B_k\setminus B_{k+1}} |u_k-u|.}
	Using now \eqref{my1}, we get that
	\eqlab{\label{my2}\|D(u_k-u_{k+1})\|_{L^{\infty}(B_{k+2})}   \leq c_{n,s} \rho^{(2s-1)k}\omega(\rho^k)} and
\eqlab{\label{y1}\|D^2(u_k-u_{k+1})\|_{L^{\infty}(B_{k+2})} \leq c_{n,s}  \rho^{(2s-2)k} \omega(\rho^k).}
Let us fix $s>1/2$. Then for any given point $z$ near the origin we have that
\eqlab{\label{bla1} |Du(z)-Du(0)|\leq &\; |Du_k(z)-Du(z)|+ |Du_k(0)-Du(0)|+|Du_k(z)-Du_k(0)| \\ = &\;A_1+A_2+A_3.}
For $k \in \N^*$ fixed, we take $z$ such that $\rho^{k+2}\leq |z|\leq \rho^{k+1}$. Using \eqref{my3}  we get that
\[ A_1 \leq c_{n,s} \rho^{(2s-1)k}\omega(\rho^k).\] 
Taking into account \eqref{lim1} and using \eqref{my2}, we have that
\[ A_2 \leq \sum_{j=k}^{\infty} |Du_j(0)-D u_{j+1}(0)| \leq c_{n,s}\sum_{j=k}^{\infty} \rho^{(2s-1)j}\omega(\rho^j),\] therefore by renaming the constants
\bgs{  A_1+A_2 \leq &\;   c_{n,s} \rho^{(2s-1)k}\omega(\rho^k) + c_{n,s}\sum_{j=k}^{\infty} \rho^{(2s-1)j}\omega(\rho^j) \\ &\; \leq c_{n,s}\sum_{j=k}^{\infty} \rho^{(2s-1)j}\omega(\rho^j) .}  
 For the positive constant $c_s= (2s-1) / \lr{{\rho^{1-2s}-1}}$ and any $j =k, k+1, \dots$ we have that 
\[\rho^{(2s-1)j}=c_s \int_{\rho^{j}}^{\rho^{j-1}} t^{2s-2}\, dt.\]
Since $\omega$ is a increasing function, we obtain that
\bgs{\omega(\rho^j) \rho^{(2s-1)j}  = c_s\, \omega(\rho^j) \int_{\rho^{j}}^{\rho^{j-1}} t^{2s-2}\, dt \leq c_s \int_{\rho^{j}}^{\rho^{j-1}} \omega(t) t^{2s-2}\, dt,}
hence given that $8|z|\geq \rho^{k-1}$
\bgs{ \sum_{j=k}^{\infty} \rho^{(2s-1)j}\omega(\rho^j) \leq &\;  c_s \sum_{j=k}^{\infty} \int_{\rho^{j}}^{\rho^{j-1}} \omega(t) t^{2s-2}\, dt
\leq c_s \int_0^{\rho^{k-1}} \omega(t) t^{2s-2}\, dt
 \\
 \leq  &\; {c_s  \int_0^{8|z|} \omega(t)t^{2s-2}\, dt } .}
  Therefore, \eqlab{\label{a12} A_1 + A_2 \leq c_{n,s} \int_0^{8|z|} \omega(t)t^{2s-2}\, dt.} 
%
%
Moreover,  for $j=0, 1, \dots, k-1$ we consider $h_j:= u_{j+1}-u_{j}$ and have that
	\bgs{ A_3 \leq \sum_{j=0}^{k-1} |Dh_j(z)-Dh_j(0)| + |Du_0(z)-Du_0(0)|.}
	By the mean value theorem, there exists $\theta \in(0,|z|)$ such that
	\[ |Dh_j(z)-Dh_j(0)|\leq |z||D^2h_j(\theta)|\] and since $|z|\leq \rho^{k+1}$, thanks to \eqref{y1} we obtain that 
	\bgs{ |D^2h_j(\theta)| \leq  c_{n,s} \rho^{(2s-2)j} \omega(\rho^{j}).}
	Hence
	\[\sum_{j=0}^{k-1}|Dh_j(z)-Dh_j(0)| \leq c_{n,s} |z|  \sum_{j=0}^{k-1} \rho^{(2s-2)j}\omega(\rho^{j})
	=c_{n,s} |z|  \Big( \sup_{\overline B_1}|f| +  \sum_{j=1}^{k-1} \rho^{(2s-2)j}\omega(\rho^{j})\Big).\]
As previously done, we have that for the positive constant $c_s=(2-2s)/\lr{{1-\rho^{2-2s}}}$  and $j=1,\dots, k-1$
\[\rho^{(2s-2)j}= c_s \int_{\rho^{j}}^{\rho^{j-1}} t^{2s-3}\, dt\]and since $\omega$ is increasing
\bgs{\omega(\rho^{j}) \rho^{(2s-2)j} \leq c_s \int_{\rho^{j}}^{\rho^{j-1}} \omega(t) t^{2s-3}\, dt.} It follows that
\bgs{ \sum_{j=1}^{k-1} \rho^{(2s-2)j}\omega(\rho^{j}) \leq &\;  c_s \sum_{j=1}^{k-1} \int_{\rho^{j}}^{\rho^{j-1}} \omega(t) t^{2s-3}\, dt
\leq c_s  \int_{\rho^{k-1}}^1 \omega(t)t^{2s-3}\, dt\\
 \leq   &\;c_s \int_{|z|}^1 \omega(t)t^{2s-3}\, dt, } since $|z|\leq \rho^{k-1}$.  Therefore, \[\sum_{j=1}^{k-1}|Dh_j(z)-Dh_j(0)| \leq c_{n,s} |z| \int_{|z|}^1  \omega(t)t^{2s-3}\, dt.\]
 Moreover, let 
 \[ v_0(x):= k_{n,s} f(0) (1-|x|^2)^s_+ \quad \mbox{ for } x \in \Rn . \]  Then (see Section 2.6 in \cite{nonlocal}, or the general result in \cite{dyda}), for the appropriate value of $ k_{n,s}$, we have in $B_1$ that  $\frlap v_0(x) =f(0)$. Then the function $u_0-v_0$ is $s$-harmonic in $B_1$, with boundary data $u$. We have that
 \[ |Du_0(z)-Du_0(0)|\leq |z| |D^2 u_0(\theta)|\leq |z|\lr{ |D^2(u_0-v_0) (\theta)| + |D^2 v_0(\theta)|}.\]
  Using the estimate in \eqref{estimate1} we have  for $\theta \in (0,|z|)$
    \[ |D^2(u_0-v_0) (\theta)|\leq  c_{n,s}\,  \, \|u\|_{L^{\infty}(\Rn\setminus B_1)}.\] Moreover, $|D^2 v_0(\theta)|$ is bounded. It follows that  
    \[ |Du_0(z)-Du_0(0)|\leq c_{n,s} |z| \|u\|_{L^{\infty}(\Rn\setminus B_1) } ,\] hence
 \[ A_3 \leq c_{n,s}|z| \lr{ \sup_{\overline B_1}|f| + \|u\|_{L^{\infty}(\Rn\setminus B_1) } + \int_{|z|}^1  \omega(t)t^{2s-3}\, dt}.\]
 
 Inserting this and \eqref{a12} into \eqref{bla1} we finally obtain that 
 \bgs{ |Du(z)-Du(0)| \leq &\; C_{n,s} \bigg[ |z|\lr{  \|u\|_{L^{\infty}(\Rn\setminus B_1)} +\sup_{\overline B_1}|f|}  + \int_0^{c|z|} \omega(t)t^{2s-2}\, dt \\  &\;+ |z| \int_{|z|}^1  \omega(t)t^{2s-3}\, dt \bigg].} From this the conclusion plainly follows.
  This concludes the proof of the Theorem.
\end{proof}
%

\end{document}